\documentclass[12pt,leqno]{article}
\usepackage{amssymb,amsfonts,amsmath,amsthm,amscd,mathrsfs}
\usepackage{MnSymbol}
\usepackage{PSFigure,psfrag}
\usepackage{graphicx}
\usepackage{centernot}
\usepackage{stmaryrd}

\def\IR{{\mathbb R}}

\def\IT{{\mathbb T}}

\def\IZ{{\mathbb Z}}

\def\IX{{\mathbb X}}

\def\n{\noindent}

\def\dis{\displaystyle}

\def\fr{\mbox{\footnotesize $\dis\frac{1}{2}$}}

\def\ov{\overline}

\def\f{\footnotesize}

\def\r{\longrightarrow}

\def\wh{\widehat}
\def\wt{\widetilde}

\def\cB{{\cal B}}

\def\cD{{\cal D}}

\def\cJ{{\cal J}}

\dimendef\dimen=0

\newtheorem{theorem}{Theorem}[section]
\newtheorem{lemma}[theorem]{Lemma}

\newtheorem{proposition}[theorem]{Proposition}
\newtheorem{remark}[theorem]{Remark}

\begin{document}

\noindent
~

\bigskip
\begin{center}
{\bf ON LARGE DRIFT ENHANCEMENT FOR \\  THE RELAXATION TIME OF AVERAGES}
\end{center}

\begin{center}
Alain-Sol Sznitman$^1$ and Klaus Widmayer$^2$
\\[2ex]
Preliminary Draft
\end{center}

\begin{abstract}
In this note we prove in a broad analytic set-up a general result characterizing the asymptotic vanishing of the $L^2$-relaxation time of averages in the presence of a large incompressible drift-like term. The broad analytic set-up is analogous to that of Constantin-Kiselev-Ryzhik-Zlatos '08 or of Coti Zelati-Delgadino-Elgindi '20. A crucial role is played by the singular value gap method of Chatterjee '25, further developed by Eberle-Lörler '26.

\end{abstract} 

\vfill
\hfill October 2026

\vfill
\noindent
{\footnotesize
-------------------------------- \\
$^1\,$Departement of Mathematics, ETH Zurich, CH-8092 Zurich, Switzerland\\
$^2\,$Institute of Mathematics, University of Zurich, CH-8057 Zurich, Switzerland and Faculty of 

\vspace{-0.5ex}
\noindent
~~\!Mathematics, University of Vienna, A-1090 Vienna, Austria
}


\newpage
\thispagestyle{empty}
~

\newpage
\setcounter{page}{1}

\setcounter{section}{-1}
\section{Introduction}

Consider a time-homogeneous Markov process with a stationary probability $\mu$ on a state space $S$, inducing a strongly continuous contraction semi-group $(P_t)_{t \ge 0}$ on $L^2(\mu)$. As in \cite{EberLorl26}, one can quantify the speed of convergence to $\mu$ via the $L^2$-relaxation time:
\begin{equation}\label{0.1}
\tau_{\rm rel}\, \big((P_t)_{t \ge 0}\big) = \inf\{t \ge 0; \|P_t\|_{L^2_0(\mu) \r L^2_0(\mu)} \le 1/ e\}
\end{equation}
(where $L^2_0(\mu)$ stands for the subspace of $L^2(\mu)$ orthogonal to constants, which is left invariant by the semi-group due to the stationarity of $\mu$), and also by the $L^2$-relaxation time of averages:
\begin{equation}\label{0.2}
\begin{split}
\ov{\tau}_{\rm rel}\, \big((P_t)_{t \ge 0})  = & \inf\{\tau > 0; \, \| \ov{P}_t \|_{L^2_0(\mu) \r L^2_0(\mu)} \le 1 / e, \; \mbox{for all} \; t \ge \tau\}, \; \mbox{where}
\\
& \ov{P}_t \,f = \mbox{\f $\dis\frac{1}{t}$} \dis\int^t_0 P_s \,f \, ds, \; \mbox{for $f \in L^2(\mu)$ and $t > 0$}.
\end{split}
\end{equation}

\n
While $\|P_t\|_{L^2_0(\mu) \r L^2_0(\mu)}$ often decays exponentially with time, $\| \ov{P}_t\|_{L^2_0(\mu) \r L^2_0(\mu)}$ typically decays inversely proportional with time, see \cite{EberLorl26}, and the main purpose of this note is to characterize certain situations where ``turning on the drift boosts convergence to equilibrium measured in terms of (\ref{0.2})''.

\medskip
Similar questions concerning the influence of a ``large drift'' have a long history, see for instance \cite{BereHameNadi05}, \cite{HwanHwanShue05} and the references therein. In the spirit of the results discussed here, it was shown (in greater generality) in Theorem 0.3 of \cite{BereHameNadi05}, that when $U$ is a bounded smooth open subset of $\IR^d, d \ge 1, b(\cdot)$ a smooth $\IR^d$-valued function on $\IR^d$, with zero divergence, then, considering for $a \ge 0$, $\lambda_a$ the principal Dirichlet eigenvalue of $\Delta + a \, b \cdot \nabla$, one has the following statement:
\begin{equation}\label{0.3}
\begin{array}{l}
\mbox{$\lambda_a, a \ge 0$, remains bounded, when there is a non-zero $f$ in the Sobolev}
\\
\mbox{space $H^1_0(U)$ such that $b \cdot \nabla f = 0$, a.e.~on $U$. And otherwise,}
\\
\mbox{if no such $f$ exists, then $\lim\limits_{a \r \infty} \lambda_a = \infty$}.
\end{array}
\end{equation}

\n
On a question with a similar flavor, it was shown in Theorem 1.2 of \cite{ConsKiseRyzhZlat08} (in much greater generality) that when $\IT^d, d \ge 1$, stands for the $d$-dimensional torus, $b(\cdot)$ is a smooth $\IR^d$-valued function on $\IT^d$ with zero divergence, and $\tau_{\rm rel}(a)$ stands for the $L^2$-relaxation time of the diffusion on $\IT^d$ attached to $\Delta + a\, b \cdot \nabla$, one has the following behavior of the $L^2(\mu)$ relaxation time (with $\mu$ the uniform probability on $\IT^d$): 
\begin{equation}\label{0.4}
\begin{array}{l}
\mbox{$\tau_{\rm rel} (a), a \ge 0$, remains bounded away from $0$ when $b \cdot \nabla$ has a}
\\
\mbox{non-constant eigenfunction in the Sobolev space $H^1(\IT^d)$}, 
\\
\mbox{and otherwise $\lim\limits_{a \r \infty} \, \tau_{\rm rel}(a) = 0$}.
\end{array}
\end{equation}
In the present note, we show (in greater generality) that in the above example (with $\ov{\tau}_{\rm rel} (a)$ similarly defined as in (\ref{0.2})):
\begin{equation}\label{0.5}
\begin{array}{l}
\mbox{$\ov{\tau}_{\rm rel} (a), a \ge 0$, remains bounded away from $0$ when $b \cdot \nabla f = 0$ has a}
\\
\mbox{non-constant solution in $H^1(\IT^d)$, and otherwise $\lim\limits_{a \r \infty} \, \ov{\tau}_{\rm rel}(a) = 0$}.
\end{array}
\end{equation}
Thus, in particular, when identifying the influence of a large coefficient $a$ on $\ov{\tau}_{\rm rel} (a)$, unlike the case of $\tau_{\rm rel} (a)$, one only needs to consider the zero-eigenvalue, instead of all eigenvalues of $b \cdot \nabla$. Incidentally, the absence of a non-constant $f$ in $H^1(\IT^d)$ with $b \cdot \nabla f = 0$, is a weaker condition than the ergodicity of the flow generated by $b$, see Section 2, Example 2.

\medskip
In this note we prove in a broad analytic set-up, similar to that of \cite{ConsKiseRyzhZlat08} and \cite{CotiDelgElgi20} a general result implying (\ref{0.5}), see Theorem \ref{theo1.4}. A crucial role is played by the singular value gap methods put forward by \cite{Chat25}, see also \cite{Xu26}, and further developed by \cite{EberLorl26}. The main Theorem \ref{theo1.3} concerns the behavior of the square of the singular value gap in our abstract set-up. Together with the results of \cite{EberLorl26}, it yields Theorem \ref{theo1.4}. In Section 2, we discuss three examples. The last example is motivated by the work of the authors in \cite{SzniWidm26b}, concerning random stirring in the presence of a Galerkin two-dimensional Euler-type incompressible drift on a surface of constant energy and enstrophy. It leaves a challenging open question in Remark \ref{rem2.1}. The second example in Section 2 follows a suggestion of ChatGPT.

\section{The abstract result}
\setcounter{equation}{0}

In this section we introduce an abstract set-up similar to \cite{CotiDelgElgi20}  on p. 1211, or \cite{ConsKiseRyzhZlat08} on p.~647 and Section 2. Our main results here are Theorems {\ref{theo1.3} and \ref{theo1.4}. Theorem \ref{theo1.4} offers a broad generalization of (\ref{0.5}) and its proof relies on Theorem \ref{theo1.3} which concerns the square of singular value gaps. We now describe the set-up.

\medskip
We have a separable Hilbert space $H$ (playing the role of $L^2_0(\mu)$ in the Introduction), with scalar product denoted by $\langle \cdot, \cdot \rangle$ and norm by $\| \cdot \|$, as well as a self-adjoint, positive, unbounded operator with discrete spectrum denoted by $-D$ (for instance the negative Laplacian on functions of zero average on the torus $\IT^d$ in the example corresponding to (\ref{0.5})). Attached to $D$ is a non-decreasing sequence $\lambda_k, k \ge 1$, of positive eigenvalues tending to $\infty$, and a corresponding orthonormal basis of eigenfunctions $\varphi_k, k \ge 1$. In this fashion the spaces $H^1$ and $H^2$ that will be relevant to our analysis consist of vectors $f = \Sigma_{k \ge 1} \, c_k \, \varphi_k$ in $H$ such that $\| f \|_1^2 = \Sigma_k \,\lambda_k \, c^2_k < \infty$ or $\| f \|^2_2 = \Sigma_k \, \lambda^2_k \, c^2_k < \infty$ (in the context of (\ref{0.5}) the spaces correspond to the Sobolev spaces $\dot{H}^1(\IT^d)$ and $\dot{H}^2(\IT^d)$, where the superscript dots refer to functions of zero average on $\IT^d$). One then has
\begin{equation}\label{1.1}
\mbox{$\| f \|^2_2 = \| D f \|^2$ and $\| f \|^2_1 = - \langle Df, f \rangle$, for $ f \in H^2$}.
\end{equation}
Further, we have a possibly unbounded antisymmetric operator on $H$ denoted by $B$, that is, $i\, B$ is a self-adjoint operator on the complexification of $H$ (the operator $B$ corresponds to $b \cdot \nabla$ in the context of (\ref{0.5})). We assume that the domain of $B$ contains $H^1$ and that there exists $c_* > 0$ such that
\begin{equation}\label{1.2}
\| B f \| \le c_* \, \| f \|_1, \; \mbox{for all $f \in H^1$}.
\end{equation}
We further assume (this is condition (2.4) of  \cite{CotiDelgElgi20}) that there exists $c_{**} > 0$ such that
\begin{equation}\label{1.3}
| \langle D f, B f \rangle | \le c_{**} \, \| f \|^2_1, \; \mbox{for all $f \in H^2$}.
\end{equation}
The above conditions are for instance fulfilled in the example corresponding to (\ref{0.5}): this is a special case of \cite{CotiDelgElgi20}, Section 3.1, where the assumptions are shown to hold in the case of the Laplacian on a smooth compact manifold (playing the role of $D$) and a first order operator $B$ attached to a smooth divergence free drift on the manifold.

\medskip
In addition, see for instance \cite{ConsKiseRyzhZlat08} on p.~649, one considers for $a \ge 0$
\begin{equation}\label{1.4}
\begin{array}{l}
\mbox{$(Q^a_t)_{t \ge 0}$, the strongly continuous contraction semigroup on $H$ with}
\\
\mbox{generator $L_a = D + a B$}
\end{array}
\end{equation}
(in the context of (\ref{0.5}) it corresponds to the restriction of $(P_t)_{t \ge 0}$ to $L^2_0(\mu)$). 

\medskip
We introduce for $a \ge 0$ the key quantity
\begin{equation}\label{1.5}
\gamma_a = \inf \{ \| L_a f\|^2; \; f \in H^2, \| f \| = 1\}
\end{equation}
(this is the square of the singular value gap mentioned in the Introduction).

\medskip
The behavior of $\gamma_a$ as $a \r \infty$ will be at the heart of Theorem \ref{theo1.3} below. We start with the (somewhat classical) fact that the infimum in (\ref{1.5}) actually is a minimum.

\begin{lemma}\label{lem1.1}
For any $a \ge 0$, there is an $f_a$ in $H^2$ such that
\begin{equation}\label{1.6}
\mbox{$\gamma_a = \|L_a \,f_a\|^2$, and $\|f_a\| = 1$}.
\end{equation}
\end{lemma}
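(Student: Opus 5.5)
Take a minimizing sequence $f_n \in H^2$ with $\|f_n\| = 1$ and $\|L_a f_n\|^2 \to \gamma_a$. The plan is to show that $(f_n)$ is bounded in $H^2$, extract a limit, and check that the limit attains the infimum. The key a priori estimate is an $H^2$ bound on $f$ in terms of $\|L_a f\|$ and $\|f\|$, for $f \in H^2$. We expand
\[
\|L_a f\|^2 = \|Df\|^2 + 2a \langle Df, Bf\rangle + a^2 \|Bf\|^2 \ge \|f\|_2^2 - 2a\, c_{**}\, \|f\|_1^2 ,
\]
using (\ref{1.1}) and (\ref{1.3}). Since $\|f\|_1^2 = \sum_k \lambda_k c_k^2 \le \|f\|_2 \|f\|$ by Cauchy--Schwarz, we get $\|f\|_1^2 \le \varepsilon \|f\|_2^2 + \varepsilon^{-1}\|f\|^2$. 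Choosing $\varepsilon = 1/(4 a c_{**})$ (the case $a=0$ is immediate) then gives
\[
\|f\|_2^2 \le 2\|L_a f\|^2 + C_a \|f\|^2 .
\]
So the minimizing sequence is bounded in $H^2$, and in particular bounded in $H^1$.

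Next we extract the limit. Since $\lambda_k \to \infty$, the embedding $H^2 \hookrightarrow H^1 \hookrightarrow H$ is compact: truncation to finitely many modes approximates uniformly on $H^2$-bounded sets, because the tails satisfy $\sum_{k>K} \lambda_k c_k^2 \le \lambda_K^{-1}\|f\|_2^2$. Passing to a subsequence, $f_n \to f_a$ weakly in $H^2$ and strongly in $H^1$ and $H$. Hence $\|f_a\| = 1$ and $f_a \in H^2$.

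The remaining issue, and the main one, is lower semicontinuity of $\|L_a f_n\|$. The term $Df_n$ converges weakly in $H$ to $Df_a$, since $D: H^2 \to H$ is bounded, and weak convergence in $H^2$ tested against $\varphi_k$ identifies the limit. By (\ref{1.2}), $Bf_n \to Bf_a$ strongly in $H$, because $f_n \to f_a$ in $H^1$ and $B$ is bounded from $H^1$ to $H$. Therefore $L_a f_n = Df_n + aBf_n \rightharpoonup L_a f_a$ weakly in $H$. Weak lower semicontinuity of the norm gives $\|L_a f_a\|^2 \le \liminf \|L_a f_n\|^2 = \gamma_a$, and the reverse inequality holds by the definition (\ref{1.5}). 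This proves (\ref{1.6}).

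The only delicate point is the coercivity estimate. It is where hypothesis (\ref{1.3}) enters: the cross term $\langle Df, Bf\rangle$ cannot be controlled by (\ref{1.2}) alone without losing a full derivative, and (\ref{1.3}) is what lets it be absorbed by interpolation between $\|f\|_2$ and $\|f\|$.
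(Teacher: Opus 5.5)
Your proof is correct, and its skeleton (minimizing sequence, $H^2$ bound, compact extraction, weak lower semicontinuity) is the paper's; the difference is in the coercivity estimate. You derive the $H^2$ bound from (\ref{1.3}) by expanding the square, together with the interpolation $\|f\|_1^2 \le \|f\|_2\,\|f\|$. This is essentially the computation (\ref{1.16}), which the paper performs only later, in Proposition \ref{prop1.2}.

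The paper uses only (\ref{1.2}). The same interpolation gives $a\|Bf\| \le a c_* \|f\|_1 \le \frac{1}{2}\|Df\| + c_a\|f\|$, which is (\ref{1.7}). The triangle inequality (\ref{1.8}) then yields $\|Df\| \le 2\|L_a f\| + 2c_a$ for $\|f\| = 1$.

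Consequently your closing remark is mistaken: (\ref{1.3}) is not needed for Lemma \ref{lem1.1}. Hypothesis (\ref{1.2}) plus interpolation already makes $aB$ a perturbation of $D$ with relative bound at most $\frac{1}{2}$. Equivalently, the crude bound $|\langle Df, Bf\rangle| \le \|Df\|\,\|Bf\| \le c_*\|f\|_2^{3/2}\|f\|^{1/2}$ is still of lower order than $\|f\|_2^2$, so Young's inequality absorbs it.

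Your route buys only a better dependence on $a$, roughly $\|f\|_2 \lesssim \|L_a f\| + a\,c_{**}$ instead of $\|L_a f\| + (a c_*)^2$. That is immaterial here since $a$ is fixed, whereas the paper's route shows the lemma holds under (\ref{1.2}) alone. Your last step, using $L_a f_n \rightharpoonup L_a f_a$ weakly in $H$ directly, is a slightly more streamlined version of the paper's termwise passage to the limit in (\ref{1.9})--(\ref{1.10}).
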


\begin{proof}
We first note that for any $a \ge 0$, there is $c_a > 0$ such that
\begin{equation}\label{1.7}
a \, \|B f\| \le \mbox{\f $\dis\frac{1}{2}$} \; \| D f \| + c_a \,\| f \|, \; \mbox{for all $f \in H^2$}.
\end{equation}
Indeed, by (\ref{1.2}) we have $\| Bf \| \le c_* \| f \|_1$, and with the notation above (\ref{1.1}), one has for all $a \ge 0$ a non-negative $\wt{c}_a$ such that for all sequences of reals $c_k, k \ge 1$, $(c_* a)^2\, \Sigma_k \, \lambda_k \, c^2_k \le \frac{1}{4} \, \Sigma_k \, \lambda_k^2 \, c_k^2 + \wt{c}_a \, \Sigma_k \, c_k^2$, and (\ref{1.7}) follows.

\medskip
As a result of (\ref{1.7}), we see that for fixed $a \ge 0$, for all $f \in H^2$ with $\| f \| = 1$
\begin{equation}\label{1.8}
\|L_a f\| = \| D f + a \, B f\| \ge \| Df \| - a \| B f \| \stackrel{(\ref{1.7})}{\ge} \fr \; \| D f \| - c_a.
\end{equation}
Thus, for (\ref{1.5}), when $g_n$ is a minimizing sequence in $H^2$ such that $\| g_n \| = 1$ and $\| L_a \, g_n\|^2 \r \gamma_a$, we see by (\ref{1.8}) that $\| D g_n\|$ is bounded. We can then extract a subsequence $g_{n_k}$ converging weakly in $H^2$ to $g_*$, see \cite{Lax02} on p.~104, and strongly in $H^1$ and $H$ to $g_*$, so that $\| D g_*\|^2 = \|  g_*\|^2_2 \le \liminf_k \, \|g_{n_k}\|^2_2$. We then have 
\begin{equation}\label{1.9}
\gamma_a = \lim\limits_k \|L_a \, g_{n_k} \|^2 = \lim\limits_k (\|g_{n_k}\|^2_2 + 2a \langle D g_{n_k}, Bg_{n_k}\rangle + a^2 \|Bg_{n_k}\|^2).
\end{equation}
Now, $Dg_{n_k}$ converges weakly in $H$ to $Dg_*$ and by (\ref{1.2}) $Bg_{n_k}$ converges strongly to $Bg_*$ in $H$, so that $\langle D g_{n_k},Bg_{n_k}\rangle$ tends to $\langle Dg_*, Bg_*\rangle$. Together with the observation above (\ref{1.9}), we see that $\| g_*\| = 1$ and
\begin{equation}\label{1.10}
\gamma_a \ge \| Dg_*\|^2 + 2a \langle Dg_*, Bg_*\rangle + a^2 \|Bg_*\|^2 = \|L_a g_*\|^2.
\end{equation}
The claim (\ref{1.6}) follows with $f_a = g_*$, and Lemma \ref{lem1.1} is proved. 
\end{proof}

The next proposition will be helpful in the proof of Theorem \ref{theo1.3} below, when showing that if $\gamma_a$ does not tend to infinity as $a$ goes to infinity, then there is a non-trivial solution $f_*$ in $H^1$ of $Bf_* = 0$.

\begin{proposition}\label{prop1.2}
For $a \ge 0$ and $f_a$ as in \rm{(\ref{1.6})}, one has
\begin{equation}\label{1.11}
\gamma_a \ge (\|f_a \|_2 - a \,\|Bf_a \|)^2 + a \mbox{\f $\dis\frac{\|Bf_a\|}{\|f_a\|_2}$} \; \| f_a \|_1^4,
\end{equation}
and, see (\ref{1.3}) for notation,
\begin{equation}\label{1.12}
\gamma_a  + 2 a \, c_{**} \, \|f_a\|_1^2 \ge a^2 \|Bf_a\|^2 + \|f_a \|^2_2.
\end{equation}
\end{proposition}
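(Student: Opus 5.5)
The plan is to expand the square $\|L_a f_a\|^2$ and bound the cross term. Both inequalities in fact hold for every $f \in H^2$ with $\|f\| = 1$ in place of $\gamma_a$ and $f_a$, so minimality of $f_a$ from Lemma \ref{lem1.1} only enters through $\gamma_a = \|L_a f_a\|^2$, $\|f_a\| = 1$. Write $f = f_a$. By (\ref{1.1}) and bilinearity,
\begin{equation*}
\gamma_a = \|Df\|^2 + 2a\langle Df, Bf\rangle + a^2\|Bf\|^2 = \|f\|_2^2 + 2a\langle Df, Bf\rangle + a^2 \|Bf\|^2 .
\end{equation*}

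For (\ref{1.12}), bound the cross term from below using (\ref{1.3}): $2a\langle Df, Bf\rangle \ge -2a\,c_{**}\|f\|_1^2$. Moving this to the left-hand side gives (\ref{1.12}) immediately.

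For (\ref{1.11}), add and subtract $2a\|Df\|\,\|Bf\|$ in the expansion:
\begin{equation*}
\gamma_a = (\|f\|_2 - a\|Bf\|)^2 + 2a\big(\langle Df, Bf\rangle + \|Df\|\,\|Bf\|\big).
\end{equation*}
It then suffices to show
\begin{equation*}
\langle Df, Bf\rangle + \|Df\|\,\|Bf\| \ge \|Bf\|\,\|f\|_1^4/(2\|f\|_2).
\end{equation*}
This is the only real step. The naive Cauchy--Schwarz bound only gives $\ge 0$, so one must exploit orthogonality structure.

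Antisymmetry of $B$ gives $\langle Bf, f\rangle = 0$. Also $\langle Df, f\rangle = -\|f\|_1^2$. So decompose
\begin{equation*}
Df = -\|f\|_1^2 f + h, \qquad h \perp f,
\end{equation*}
which, using $\|f\|=1$, gives $\|h\|^2 = \|Df\|^2 - \|f\|_1^4$. Since $Bf \perp f$, we get $\langle Df, Bf\rangle = \langle h, Bf\rangle \ge -\|h\|\,\|Bf\|$. Hence
\begin{equation*}
\langle Df, Bf\rangle + \|Df\|\,\|Bf\| \ge \|Bf\|\Big(\|Df\| - \sqrt{\|Df\|^2 - \|f\|_1^4}\Big) = \frac{\|Bf\|\,\|f\|_1^4}{\|Df\| + \sqrt{\|Df\|^2 - \|f\|_1^4}} \ge \frac{\|Bf\|\,\|f\|_1^4}{2\|f\|_2} .
\end{equation*}
Multiplying by $2a$ yields (\ref{1.11}). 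Note that $\|f\|_2 > 0$ since $f \neq 0$ and $\lambda_k > 0$, so the quotient is well defined.
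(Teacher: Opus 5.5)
Your proof is correct and is essentially the paper's argument. Your $h = Df_a + \|f_a\|_1^2 f_a$ is exactly the paper's $(D+\lambda)f_a$ with $\lambda = \|f_a\|_1^2$, and the identity $\|h\|^2 = \|f_a\|_2^2 - \|f_a\|_1^4$, the rationalization, and the bound $2\|f_a\|_2$ on the denominator match (\ref{1.13})--(\ref{1.15}), while your derivation of (\ref{1.12}) from (\ref{1.3}) is identical to the paper's.
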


\begin{proof}
We first prove (\ref{1.11}). We note that $B$ is antisymmetric, its domain contains $H^1$, and $f_a$ belongs to $H^2$, so that $\langle f_a, Bf_a\rangle = 0$. Hence, for all $a \ge 0$ and $\lambda$ in $\IR$, one has
\begin{equation}\label{1.13}
\begin{array}{lcl}
\gamma_a &\!\!\!\! \stackrel{(\ref{1.6})}{=} &\!\!\!\! \| Df_a + a \,Bf_a\|^2 = \|D f_a \|^2 + 2 a \langle (D+ \lambda)\,f_a, Bf_a \rangle + a^2 \,\|Bf_a\|^2
\\[1ex]
&\!\!\!\! \ge &\!\!\!\!  \|D f_a \|^2 - 2 a \,\|(D+ \lambda) \,f_a \| \; \|Bf_a \| + a^2 \,\|Bf_a\|^2
\\[1ex]
&\!\!\!\! = &\!\!\!\!  (\| Df_a \| - a \, \|Bf_a \|)^2 + 2a \, \|Bf_a \| \; (\|Df_a \| - \|(D + \lambda) \,f_a\|).
\end{array}
\end{equation}
Choosing $\lambda = -\langle Df_a, f_a \rangle \stackrel{(\ref{1.1})}{=} \|f_a \|^2_1$ and using $\|f_a\| =1$, we find that
\begin{equation}\label{1.14}
\begin{split}
\|Df_a + \lambda f_a\|^2 & = \|Df_a\|^2 + 2\lambda \, \langle Df_a, f_a \rangle + \lambda^2
\\[1ex]
& = \|f_a \|^2_2 - \|f_a \|_1^4 .
\end{split}
\end{equation}
We can now come back to the last line of (\ref{1.13}), and with $\|Df_a\| = \|f_a\|_2$ find:
\begin{equation}\label{1.15}
\begin{split}
\gamma_a & \ge \|f_a\|_2 - a \, \|Bf_a\|)^2 + 2 a \, \| Bf_a \| \; \|f_a\|^4_1 \, / \, \{\|f_a\|_2 + (\|f_a\|^2_2 - \|f_a\|_1^4)^{1/2}\}
\\[0.5ex]
& \ge (\|f_a\|_2 - a\,\|Bf_a\|)^2 + a \; \mbox{\f $\dis\frac{\| Bf_a\|}{\|f_a\|_2}$} \; \|f_a\|^4_1 ,
\end{split}
\end{equation}
and this proves (\ref{1.11}).  As for (\ref{1.12}), we note using (\ref{1.3}) in the second line that
\begin{equation}\label{1.16}
\begin{split}
\gamma_a & \stackrel{(\ref{1.6})}{=} \| Df_a + a\, Bf_a\|^2 = \|f_a\|^2_2 + a^2 \, \|Bf_a\|^2 + 2a \langle Df_a, Bf_a \rangle
\\[0.5ex]
& \stackrel{(\ref{1.3})}{\ge} \|f_a\|^2_2 + a^2 \,\|Bf_a\|^2 - 2a \, c_{**} \|f_a \|^2_1,
\end{split}
\end{equation}
and the claim (\ref{1.12}) follows. This completes the proof of Proposition \ref{prop1.2}.
\end{proof}

We now come to the main
\begin{theorem}\label{theo1.3}
The following two conditions are equivalent
\begin{align}
& \lim\limits_{a \r \infty} \; \gamma_a = \infty, \label{1.17}
\\[1ex]
& \mbox{the only $f$ in $H^1$ such that $Bf = 0$ is $f = 0$}. \label{1.18}
\end{align}
Moreover, if {\rm (\ref{1.17})} (or equivalently {\rm  (\ref{1.18})}) does not hold, then
\begin{equation}\label{1.19}
\sup\limits_{a \ge 0} \; \gamma_a < \infty.
\end{equation}
\end{theorem}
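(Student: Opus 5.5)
The plan is to prove two implications: that the failure of (\ref{1.18}) implies (\ref{1.19}), and that (\ref{1.18}) implies (\ref{1.17}). Together these give the equivalence and the final assertion at once, since (\ref{1.19}) excludes (\ref{1.17}).

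\emph{Step 1: a nonzero $f \in H^1$ with $Bf = 0$ forces (\ref{1.19}).} Fix such an $f$. Since $f$ is only in $H^1$, it cannot be used directly as a test vector in (\ref{1.5}). I would instead regularize it with the semigroup of (\ref{1.4}) and set $g_a = \int_0^T Q^a_s f\, ds$ for a small $T>0$ to be chosen independently of $a$. By (\ref{1.7}), $L_a$ is a relatively bounded perturbation of $D$, so its domain is $H^2$. Hence $g_a \in H^2$ and
\begin{equation*}
L_a g_a = Q^a_T f - f, \qquad \text{so} \qquad \|L_a g_a\| \le 2\|f\| .
\end{equation*}
The remaining task is a lower bound on $\|g_a\|$ that is uniform in $a$, and I would obtain it by pairing with $f$. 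Formally,
\begin{equation*}
\frac{d}{ds}\langle Q^a_s f, f\rangle = \langle Q^a_s f, (D - aB) f\rangle = \langle Q^a_s f, Df\rangle ,
\end{equation*}
because $Bf = 0$ and $B$ is antisymmetric. This derivative is bounded in absolute value by $\|Q^a_s f\|_1\, \|f\|_1$. On the other hand, $\frac{d}{ds}\|Q^a_s f\|^2 = -2\|Q^a_s f\|_1^2$, again using antisymmetry of $B$, so $\int_0^\infty \|Q^a_s f\|_1^2\, ds \le \|f\|^2/2$. By Cauchy--Schwarz,
\begin{equation*}
\langle Q^a_s f, f\rangle \ge \|f\|^2 - \|f\|_1\,\|f\|\,\sqrt{s/2}.
\end{equation*}
Choosing $T$ with $\|f\|_1 \sqrt{T/2} \le \|f\|/2$ gives $\langle g_a, f\rangle \ge T\|f\|^2/2$, hence $\|g_a\| \ge T\|f\|/2$. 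Inserting $g_a/\|g_a\|$ into (\ref{1.5}) yields $\gamma_a \le 16/T^2$ for every $a$, which is (\ref{1.19}). I expect the main obstacle to be the rigorous justification of these derivative identities when $f$ is only in $H^1$. I would handle it by approximating $f$ in $H^1$ by $H^2$ vectors, or by reading the identities weakly, using that $Q^a_s f \in H^2$ for $s>0$ together with (\ref{1.2}).

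\emph{Step 2: (\ref{1.18}) implies (\ref{1.17}).} Suppose (\ref{1.17}) fails, so that $\gamma_{a_n} \le C$ along some sequence $a_n \to \infty$, and let $f_n = f_{a_n}$ be given by Lemma \ref{lem1.1}. The first aim is to show that $\|f_n\|_1$ is bounded, and the tool is (\ref{1.11}). Boundedness of $\gamma_{a_n}$ gives $a_n\|Bf_n\| = \|f_n\|_2 + O(1)$. If $\|f_n\|_2$ is large, then $a_n\|Bf_n\|/\|f_n\|_2 \ge 1/2$, and the second term in (\ref{1.11}) gives $\|f_n\|_1^4 \le 2C$. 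If $\|f_n\|_2$ stays bounded, then the interpolation $\|f\|_1^2 \le \|f\|_2\,\|f\|$ bounds $\|f_n\|_1$ directly.

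With $\|f_n\|_1 \le C'$ in hand, (\ref{1.12}) gives $a_n^2\|Bf_n\|^2 \le C + 2a_n c_{**} C'^2$, so $\|Bf_n\| = O(a_n^{-1/2}) \to 0$. Because $-D$ has discrete spectrum with $\lambda_k \to \infty$, the embedding $H^1 \hookrightarrow H$ is compact. We may therefore extract a subsequence converging weakly in $H^1$ and strongly in $H$ to some $f_*$, and then $\|f_*\| = 1$. By (\ref{1.2}), $B$ is bounded from $H^1$ to $H$, hence weakly continuous, so $Bf_n \rightharpoonup Bf_*$; since also $Bf_n \to 0$, we get $Bf_* = 0$. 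This gives a nonzero $f_* \in H^1$ with $Bf_* = 0$, contradicting (\ref{1.18}).
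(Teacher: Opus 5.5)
Your proposal is correct and follows the paper's proof. Step 1 is the paper's argument for (\ref{1.23}): average $Q^a_s f$ over $[0,T]$, pair with $f$, and use the energy bound $\int_0^\infty \|Q^a_s f\|_1^2\,ds \le \|f\|^2/2$, with Cauchy--Schwarz in place of $ab \le (a^2+b^2)/2$; it yields the same bound $\gamma_a^{1/2} \le 8\|f\|_1^2/\|f\|^2$. Step 2 uses the paper's ingredients for (\ref{1.20}), namely the minimizers of Lemma \ref{lem1.1}, (\ref{1.11}), (\ref{1.12}) and compactness of $H^1 \hookrightarrow H$, but organizes them more directly. The interpolation $\|f\|_1^2 \le \|f\|_2\|f\|$ gives a uniform $H^1$ bound, so (\ref{1.12}) yields $\|Bf_n\| = O(a_n^{-1/2})$ without the preliminary claim (\ref{1.21}) and its two-case split, and weak continuity of $B\colon H^1 \to H$ replaces testing against eigenfunctions.
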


\begin{proof}
We first show that
\begin{equation}\label{1.20}
\mbox{if $\liminf\limits_{a \r \infty} \; \gamma_a < \infty$, then there exists $f_*$ in $H^1 \backslash \{0\}$ such that $Bf_* = 0$}.
\end{equation}
To this end, assuming that $\liminf_{a \r \infty} \gamma_a < \infty$, we consider $a_n \r \infty$ such that $\lim_n \gamma_{a_n} = \gamma_* < \infty$. As a first step we note that necessarily
\begin{equation}\label{1.21}
\|f_{a_n}\|_2 = o(a_n), \; \mbox{as $n \r \infty$}.
\end{equation}
Indeed, otherwise, by (\ref{1.11}), extracting a subsequence still denoted by $a_n$, with at least linear growth in $a_n$, we would have $\|f_{a_n}\|_2 \sim a_n \|Bf_{a_n}\|$, and by (\ref{1.11}), $\|f_{a_n}\|_1$ would be uniformly bounded. However, this would contradict (\ref{1.12}). Hence, (\ref{1.21}) holds.

\medskip
Since (\ref{1.21}) holds, there are two possibilities:

\medskip
Either both $\|f_{a_n}\|_2$ and $a_n \|Bf_{a_n}\|$ are uniformly bounded.

\medskip
In this case, see \cite{Lax02} on p.~104, we can extract a subsequence, still denoted by $a_n$, such that $f_{a_n}$ converges weakly in $H^2$, and hence strongly in $H^1$ to $f_*$, and hence $\|Bf_*\| = \lim_n \|Bf_{a_n}\| = 0$. So $\|f_*\| = 1$ and the conclusion of (\ref{1.20}) holds.

\medskip
Or, along a subsequence, still denoted by $a_n$, both $\|f_{a_n}\|_2$ and $a_n \|Bf_{a_n}\|$ tend to infinity and are $o(a_n)$. In this case, $\|Bf_{a_n}\|$ tends to $0$, and by (\ref{1.11}) the sequence $\|f_{a_n}\|_1$ remains bounded. We can then find, by the same reference as above, a further subsequence, still denoted by $a_n$, and $f_*$ in $H^1$ with $\|f_*\| = 1$, which is a weak limit on $H^1$ and a strong limit in $H$ of $f_{a_n}$. Then, for any $\varphi$ finite linear combination of the eigenfunctions $\varphi_k, k \ge 1$, see above (\ref{1.1}), we find that
\begin{equation}\label{1.22}
\begin{array}{l}
\langle Bf_{a_n}, \varphi\rangle \underset{n}{\longrightarrow} 0, \; \mbox{and}
\\
\langle Bf_{a_n}, \varphi \rangle = - \langle f_{a_n}, B \varphi \rangle \underset{n}{\longrightarrow} - \langle f_*, B \varphi\rangle = \langle Bf_*, \varphi \rangle.
\end{array}
\end{equation}
This shows that $\langle Bf_*, \varphi \rangle = 0$, for all $\varphi$ as above, so that $f_*$ satisfies the conclusion of (\ref{1.20}). This completes the proof of (\ref{1.20}), and shows by contradiction that (\ref{1.18}) implies (\ref{1.17}).

\medskip
We will now prove that
\begin{equation}\label{1.23}
\mbox{if there is $f_* \not= 0$ in $H^1$ with $Bf_* = 0$, then $\sup\limits_{a \ge 0} \gamma_a < \infty$}.
\end{equation}
This will complete the proof of the equivalence of (\ref{1.17}) and (\ref{1.18}), and the fact if (\ref{1.18}) (or (\ref{1.17})) does not hold then (\ref{1.19}) holds. Theorem \ref{theo1.3} will then be proved.

\medskip
We thus show (\ref{1.23}). Without loss of generality, we can assume that $\|f_*\| = 1$. Set $f_a(t) = Q^a_t \, f_*$, for $t \ge 0$ (see (\ref{1.4}) for notation). By \cite{{ConsKiseRyzhZlat08}} on p.~651, we have for $t \ge 0$
\begin{equation}\label{1.24}
\langle f_a(t), f_*\rangle = 1 + \dis\int^t_0 \langle D f_a(s), f_*\rangle ds \ge 1 - \fr \;\dis\int^t_0 (\|f_a(s)\|_1^2 + \|f_*\|^2_1) \,ds,
\end{equation}
as well as
\begin{equation}\label{1.25}
\dis\int^\infty_0 \|f_a(s) \|^2_1 \, ds \le \fr \,.
\end{equation}
Thus, choosing $t \le \tau_* = (2 \|f_*\|^2_1)^{-1}$, we see that
\begin{equation}\label{1.26}
\langle f_a(t), f_* \rangle \ge \fr, \; \mbox{for $0 \le t \le \tau_*$}.
\end{equation}
Since $f_* \in H^1$, $f_a(t)$ belongs to $L^2([0,T], H^2) \cap C([0,T], H^1)$, for any $T > 0$, by Theorem 2.1 of \cite{ConsKiseRyzhZlat08}, and we have, see (15) on p.~421 of \cite{Lax02} (or also (35) of \cite{EberLorl26})
\begin{equation}\label{1.27}
L_a \,\Big(\dis\int^t_0 f_a(s) \,ds\Big) = f_a(t) - f_a(0) = f_a(t) - f_*, \; \mbox{for $t \ge 0$}.
\end{equation}
Hence, by (\ref{1.5}), we have
\begin{equation}\label{1.28}
\gamma_a^{1/2} \,\Big\| \dis\int^t_0 f_a(s)\,ds\Big\| \le \| f_a(t) - f_*\| \le 2, \; \mbox{for $t > 0$}.
\end{equation}
On the other hand, by (\ref{1.26}), $\langle \int^t_0 f_a (s)\,ds, f_*\rangle \ge t/2$, for $0 \le t \le \tau_*$, so that
\begin{equation}\label{1.29}
\Big\|\dis\int^t_0 f_a(s)\, ds\Big\| \ge t/2, \; \mbox{for $0 \le t \le \tau_*$}.
\end{equation}
Inserting this inequality into (\ref{1.28}), and recalling the definition of $\tau_*$ above (\ref{1.26}), we find that for all $a \ge 0$,
\begin{equation}\label{1.30}
\gamma_a^{1/2} \le 4/\tau_* = 8 \, \|f_*\|^2_1 .
\end{equation}
This proves (\ref{1.23}), and as already explained, completes the proof of Theorem \ref{theo1.3}.
\end{proof}

One can now introduce the relaxation time of averages for $(Q^a_t)_{r \ge 0}$ via
\begin{equation}\label{1.31}
\ov{\tau}_{\rm rel}\, \big((Q_t^a)_{t \ge 0}\big) = \inf\{\tau \ge 0; \; \Big\| \mbox{\f $\dis\frac{1}{t}$} \dis\int^t_0 Q^a_s \, ds\Big\|_{H \r H} \le 1/e, \; \mbox{for all $t \ge \tau\}$},
\end{equation}
where $\| \; \|_{H \r H}$ refers to the operator norm on $H$.

\medskip
If one defines the lowest singular value of $L_a = D + a B$, see (\ref{1.4}), via
\begin{equation}\label{1.32}
\mbox{$s(L_a) = \sqrt{\gamma_a}$, see (\ref{1.5}) for notation},
\end{equation}
the proof of Theorem 7 of \cite{EberLorl26} is readily adapted and shows that for all $a \ge 0$
\begin{equation}\label{1.33}
2(1 - e^{-1}) / s(L_a) \le \ov{\tau}_{\rm rel} \,\big((Q^a_t)_{t \ge 0}\big) \le 2e / s(L_a),
\end{equation}
and one has the inequality
\begin{equation}\label{1.34}
\Big\| \mbox{\f $\dis\frac{1}{t}$}  \dis\int^t_0 Q^a_s f\, ds\Big\| \le \mbox{\f $\dis\frac{2}{ts(L_a)}$} \;\|f\|, \; \mbox{for $f \in H$ and $t > 0$}.
\end{equation}
This sets the stage for the main application of Theorem \ref{theo1.3} to the relaxation time of averages. We refer to (\ref{1.4}) and (\ref{1.31}) for notation.

\begin{theorem}\label{theo1.4}
The following two conditions are equivalent:
\begin{align}
& \lim\limits_{a \r \infty} \ov{\tau}_{\rm rel}\, \big((Q^a_t)_{t \ge 0}\big) = 0, \label{1.35}
\\[1ex]
& \mbox{the only $f$ in $H^1$ such that $Bf = 0$ is $f = 0$}. \label{1.36}
\end{align}
Moreover, if {\rm (\ref{1.35})} (or equivalently {\rm (\ref{1.36})}) does not hold, then
\begin{equation}\label{1.37}
\inf\limits_{a \ge 0} \ov{\tau}_{\rm rel} \,\big((Q^a_t)_{t \ge 0}\big) > 0.
\end{equation}
\end{theorem}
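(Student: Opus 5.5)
Theorem \ref{theo1.4} follows by combining Theorem \ref{theo1.3} with the two-sided comparison (\ref{1.33}) between $\ov{\tau}_{\rm rel}((Q^a_t)_{t \ge 0})$ and the lowest singular value $s(L_a) = \sqrt{\gamma_a}$. Write $\ov{\tau}(a)$ for $\ov{\tau}_{\rm rel}((Q^a_t)_{t \ge 0})$. By (\ref{1.33}), for every $a \ge 0$,
\begin{equation*}
2(1-e^{-1})\,\gamma_a^{-1/2} \le \ov{\tau}(a) \le 2e\, \gamma_a^{-1/2}.
\end{equation*}
Note that $\gamma_a > 0$ for every $a$. Indeed, by Lemma \ref{lem1.1} the infimum is attained at some $f_a$ with $\|f_a\|=1$. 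Moreover $\langle L_a f_a, f_a\rangle = \langle Df_a,f_a\rangle = -\|f_a\|_1^2 \le -\lambda_1 < 0$, using antisymmetry of $B$ on $H^1$. Hence $\|L_a f_a\| \ge \lambda_1$, so in fact $\gamma_a \ge \lambda_1^2$. Consequently both sides of the display are finite positive numbers, and $\ov{\tau}(a) \to 0$ holds if and only if $\gamma_a \to \infty$.

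For (\ref{1.36}) $\Rightarrow$ (\ref{1.35}): by Theorem \ref{theo1.3}, (\ref{1.36}) is (\ref{1.18}), which gives (\ref{1.17}), that is $\gamma_a \to \infty$. The upper bound $\ov{\tau}(a) \le 2e\,\gamma_a^{-1/2}$ then yields (\ref{1.35}).

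For (\ref{1.35}) $\Rightarrow$ (\ref{1.36}), and for (\ref{1.37}), I argue contrapositively. Suppose (\ref{1.36}) fails. Then (\ref{1.18}) fails, and the last assertion (\ref{1.19}) of Theorem \ref{theo1.3} gives $M := \sup_{a\ge0}\gamma_a < \infty$; explicitly, (\ref{1.30}) gives $M \le 64\|f_*\|_1^4$. The lower bound in (\ref{1.33}) then gives $\ov{\tau}(a) \ge 2(1-e^{-1})M^{-1/2} > 0$ for all $a \ge 0$. This is (\ref{1.37}), and it shows in particular that (\ref{1.35}) fails. Hence (\ref{1.35}) and (\ref{1.36}) are equivalent, and failure of either implies (\ref{1.37}).

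The only step that is not pure bookkeeping is the validity of (\ref{1.33}) in the present abstract set-up. The paper asserts it by adapting Theorem 7 of \cite{EberLorl26}, and I take it as given. If it had to be checked, the upper bound would come from (\ref{1.34}). That in turn would rest on the identity (\ref{1.27}) for $f \in H^1$, extended to all $f \in H$ by density, since $\|\int_0^t Q^a_s f\,ds\| \le \gamma_a^{-1/2}\|Q^a_t f - f\| \le 2\gamma_a^{-1/2}\|f\|$. The lower bound would come from testing against a near-minimizer of (\ref{1.5}), namely $f_a$ from Lemma \ref{lem1.1}. The deduction of Theorem \ref{theo1.4} itself requires only the monotone dependence of the bounds in (\ref{1.33}) on $\gamma_a$.
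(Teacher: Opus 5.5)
Your proof is correct and is the paper's argument: the paper's one-line proof combines Theorem \ref{theo1.3} with the two-sided bound (\ref{1.33}), which it cites as ``(\ref{1.4})'', evidently meaning (\ref{1.33}). Your added check that $\gamma_a \ge \lambda_1^2 > 0$ is correct and keeps the bounds in (\ref{1.33}) finite; it in fact holds for every unit $f \in H^2$, so Lemma \ref{lem1.1} is not needed for it.
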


\begin{proof}
This is the application of Theorem \ref{theo1.3} and (\ref{1.4}). 
\end{proof}

\section{Some examples}
\setcounter{equation}{0}

In this section we discuss some examples. The first two examples bring into play the $d$-dimensional torus $\IT^d = (\IR / 2 \pi \IZ)^d, d \ge 1$, and the Laplacian in the presence of a smooth divergence free drift. The assumptions of the beginning of Section 1, including (\ref{1.1}) - (\ref{1.3}) (with the natural Sobolev spaces) are actually known to hold in the much broader set-up of the Laplacian on a smooth compact Riemannian manifold in the presence of a smooth divergence free drift, see Section 3.1 of \cite{CotiDelgElgi20}, and also \cite{ConsKiseRyzhZlat08}. The third example pertains to advection and random stirring on the smooth compact connected manifold of constant enstrophy and energy, see also \cite{SzniWidm26b}.

\bigskip\n
{\bf Example 1:} \\
We consider the two-dimensional torus $\IT^2$ and $\alpha > 0$ an irrational number. We let $H = L^2_0(\IT^2)$ stand for the space of square integrable functions on $\IT^2$ with zero-average, and $H^1 = \dot{H}^1(\IT^2)$, $H^2 = \dot{H}^2(\IT^2)$ stand for the corresponding Sobolev spaces (the superscript $\cdot$ refers to zero-average functions). We let $D$ correspond to the Laplacian $\Delta$ and $B$ to $ \partial_x + \alpha \partial_y$ (acting on zero-average functions). Given $a \ge 0$, we consider as in the Introduction, see (\ref{0.4}), (\ref{0.5}), $\tau_{\rm rel}(a)$, the $L^2$-relaxation time of $\Delta + a(\partial_x + \alpha \partial_y)$ and $\ov{\tau}_{\rm rel}(a)$ the corresponding $L^2$-relaxation time of averages. As we now explain, in this example
\begin{equation}\label{2.1}
\lim\limits_{a \r \infty} \ov{\tau}_{\rm rel}(a) = 0, \; \mbox{but} \; \inf\limits_{a \ge 0} \tau_{\rm rel}(a) > 0.
\end{equation}
Thus, enhancement by a large $a$ only takes place for time-averages.

\medskip
To prove the first statement of (\ref{2.1}), we note that since $\alpha$ is irrational, the only solution of $Bf = 0$ in $H^2$ is $f = 0$ (such an $f$ can be approximated through convolution with smooth solutions of the same equation, which are constant due to the ergodicity of the flow generated by $\partial_x + \alpha \partial_y$). On the other hand, for $k \not= 0$ in $\IZ^2$, $e^{ik \cdot x}$ is an eigenfunction of $iB$ attached to the eigenvalue $-(k_1 + \alpha k_2)$ $(\not= 0)$, if $k = (k_1,k_2)$, and it belongs to the complexification of $H^1$. The second statement now follows from \cite{ConsKiseRyzhZlat08} on p.~651.

\bigskip\n
{\bf Example 2:}\\
We consider the three-dimensional torus $\IT^3$ and display an example of a smooth incompressible vector field $b(\cdot)$ on $\IT^3$ such that
\begin{equation}\label{2.2}
\begin{array}{l}
\mbox{there is a continuous non-constant function $F$ on $\IT^3$, which remains constant}\\
\mbox{along any trajectory generated by $b(\cdot)$}.
\end{array}
\end{equation}
In particular, the flow generated by $b(\cdot)$, which preserves the Lebesgue measure on $\IT^3$, is not ergodic. But,
\begin{equation}\label{2.3}
\mbox{if $G \in H^1(\IT^3)$ and $b \cdot \nabla G = 0$, then $G$ is constant a.e.,}
\end{equation}
and, in view of Theorem \ref{theo1.4}, enhancement by a large $a$ takes place for time-averages of the $L^2$-semigroup generated by $\Delta + a \, b \cdot \nabla$. As an aside, we refer to Section 6 of \cite{ConsKiseRyzhZlat08} for various examples of ergodic relaxation enhancing flows.

\medskip
To construct the above vector field $b(\cdot)$, we consider $\alpha > 0$ a Liouville irrational number, see for instance \cite{HardWrig60}, at the bottom of p.~161, as well as two sequences of positive integers $p_n,q_n, n \ge 1$, such that setting $k_n = (p_n,q_n)$, one has
\begin{align}
& \mbox{$| k_n | \ge n^2$, for $n \ge 1$, and} \label{2.4}
\\[1ex]
&\mbox{for all $M \ge 1$, $p_n - \alpha \, q_n = o(|k_n|^{-M}$), as $n \r \infty$}. \label{2.5}
\end{align}
One then defines
\begin{align}
u(y) & = \Sigma_n \,n^{-2} \cos( k_n \cdot y), \; \mbox{for $y \in \IT^2$, and} \label{2.6}
\\[1ex]
v(y) & = - \,\Sigma_n \,n^{-2} (p_n - \alpha \, q_n) \sin ( k_n \cdot y), \; \mbox{for $y \in \IT^2$}. \label{2.7}
\end{align}
Due to (\ref{2.4}), (\ref{2.5}), one thus finds that
\begin{equation}\label{2.8}
\mbox{$u$ is continuous on $\IT^2$ but not in $H^1(\IT^2)$},
\end{equation}
and setting
\begin{align}
& y _0 = (1,-\alpha) \in \IR^2, \label{2.9}
\\[1ex]
&\mbox{$v$ is smooth on $\IT^2$ and coincides with the derivative of $u$ in the direction $y_0$}. \label{2.10}
\end{align}
We then define the smooth vector field on $\IT^3 = \IT^1 \times \IT^2$:
\begin{equation}\label{2.11}
b(x,y) = \big(v(y) , y_0\big), \; \mbox{for $x \in \IT^1, y \in \IT^2$}.
\end{equation}
Hence, $b$ is incompressible:
\begin{equation}\label{2.12}
{\rm div} \, b = 0,
\end{equation}
and as one readily checks by approximation and (\ref{2.9}), (\ref{2.10})
\begin{equation}\label{2.13}
\mbox{the flow generated by $b$ preserves the function $F(x,y) = e^{ i(x - u(y))}$}
\end{equation}
(formally $b \cdot \nabla F (x,y) = [i (v - y_0 \cdot \nabla u) F] (x,y) \stackrel{(\ref{2.10})}{=} (v(y) - v(y))F(x,y) = 0$). This proves the claim (\ref{2.2}).

\medskip
Now consider $G$ as in (\ref{2.3}). We can expand $G$ in Fourier series in the first variable $x$, and write for a sequence $g_m$ in $H^1(\IT^2)$:
\begin{align}
& \mbox{$G(x,y) = \Sigma_m \, g_m(y) \, e^{ i m x}, x \in \IT^1, y \in \IT^2$, so that} \label{2.14}
\\[1ex]
& 0 = b \cdot \nabla G(x,y) = \Sigma_m \big(v(y) \,  i  m \, g_m(y) + y_0 \cdot \nabla g_m(y)\big) \,e^{ i  m x} \; \mbox{in} \; L^2 (\IT^3). \label{2.15}
\end{align}
As a result, each function of $y$ under the parenthesis in (\ref{2.15}) equals $0$, a.e.. Multiplying by $\exp\{ i m \,u(y)\}$, we find that
\begin{equation}\label{2.16}
e^{i m \,u(y)} \big(v(y) \, i m \,g_m(y) + y_0 \cdot \nabla g_m(y)\big) = 0, \; \mbox{for a.e. $y \in \IT^2$ and $m \in \IZ$}.
\end{equation}
Using (\ref{2.10}), the above expression coincides with the derivative in the $y_0$ direction, in the $L^2(dy)$ sense, of $h_m(y) = e^{i m \,u(y)} g_m(y)$. Since $y_0 = (1,-\alpha)$, with $\alpha$ irrational, the flow generated by $y_0$ on $L^2(dy)$ is ergodic, and for each $m \in \IZ$ there is a constant $c_m$ such that $h_m = c_m$ a.e.. When $m \not= 0$, since $g_m \in H^1(\IT^2)$ but $u \notin H^1(\IT^2)$ by (\ref{2.8}), we find that $c_m = 0$. And when $m = 0$, we conclude $g_0 = c_0$, a.e.. Thus, coming back to (\ref{2.14}), this concludes the proof of (\ref{2.3}).

\bigskip\n
{\bf Example 3:}\\
The third and last example pertains to advection and random stirring as in the work of the authors in \cite{SzniWidm26b}, see for instance Appendix B of \cite{SzniWidm26b}. We devise in (\ref{2.26}) below, a necessary condition for the enhancement through advection of the $L^2$-relaxation time of averages, for a certain random stirring on a space corresponding to ``fixed enstrophy and energy'', see (\ref{2.18}), and (\ref{2.23}) below.

\medskip
We have an even integer $N = 2n$, where $n \ge 4$, as well as positive $\lambda_\ell$, $1\le \ell \le N$, such that $1 = \lambda_1 = \lambda_2 < \lambda_3 = \lambda_4 < \dots < \lambda_{N-1} = \lambda_N$. We have the scalar products on $\IR^N$, denoted by $\langle \cdot, \cdot \rangle$ and $\langle \cdot, \cdot \rangle_{-1}$ associated to the square norms
\begin{equation}\label{2.17}
|x|^2 = \Sigma_\ell \, x_\ell^2 \ge |x|^2_{-1} = \Sigma_\ell \,x_\ell^2 / \lambda_\ell, \; \mbox{for $x \in \IR^N$}.
\end{equation}
We denote by $\mu$ the Gaussian measure on $\IR^N$, under which the coordinates are i.i.d. centered variables with variance $\frac{1}{2}$. For $0 < v < u$ such that $u/v \not= \lambda_\ell, 1 \le \ell \le N$, the space
\begin{equation}\label{2.18}
\IX_{u,v} = \{x \in \IR^N; |x|^2 = u, |x|^2_1 = v\}
\end{equation}
is a smooth compact connected, submanifold of $\IR^N$, see (A.10) of \cite{SzniWidm26b}. Further, one can define the regular conditional probability $\mu_{u,v}$ of $\mu$ given $|x|^2 = u$, $|x|^2_{-1} = v$, which is supported by $\IX_{u,v}$, see (A.12) of \cite{SzniWidm26b}.

\medskip
Denoting by $\cJ$ the collection of triples $J = (k,\ell,m)$ in $\{1, \dots, N\}$ with $\lambda_k < \lambda_\ell < \lambda_m$, one can define for $J$ in $\cJ$ the vector field (there are three terms in the sum):
\begin{equation}\label{2.19}
T_J (x) = \dis\sum\limits_{(\mbox{\f $a,b,c)$ cyclic permutation of $J$}} x_a \, x_b \,\Big(\mbox{\f $\dis\frac{1}{\lambda_a} - \dis\frac{1}{\lambda_b}\Big)$}\,\partial_c\,,
\end{equation}
as well as for $1 \le i \le n$
\begin{equation}\label{2.20}
R_i (x) = x_{2i} \, \partial_{2 i - 1} - x_{2i -1} \, \partial_{2i}.
\end{equation}
Any vector field $Z$ in the collection $T_J, J \in \cJ$, $R_i, 1 \le i \le n$, satisfies for all $x$
\begin{equation}\label{2.21}
{\rm div} \, Z = 0, \langle x, Z(x) \rangle = 0, \langle x, Z(x)\rangle_{-1} = 0.
\end{equation}
In addition, see Lemma B.1 of \cite{SzniWidm26b}, for $f,g$ smooth functions on $\IX_{u,v}$, one has
\begin{equation}\label{2.22}
\dis \int \!Zf \; g\; d\mu_{u,v} = - \dis\int \! f \; Zg \; d\mu_{u,v}.
\end{equation}
One then defines the diffusion operator $\cD$ on $\IX_{u,v}$ via
\begin{equation}\label{2.23}
\cD = \fr \;(\Sigma_{J \in \cJ} \,T^2_J + \Sigma^n_1 \, R^2_i).
\end{equation}
It is uniformly elliptic and self-adjoint in $L^2(\mu_{u,v})$, see (B.9), (B.10) of \cite{SzniWidm26b}.

\medskip
Motivated by the case of the Galerkin projection for the non-linear term of the two-dimensional incompressible Navier-Stokes equation in vorticity form, on  a possibly thin torus, as explained in Remark 1.1. of \cite{SzniWidm26b}, one considers real numbers $s_J, J \in \cJ$, not all zero, and the drift operator
\begin{equation}\label{2.24}
\cB = \Sigma_{J \in \cJ}\, s_J \,T_J \quad  \mbox{(it fulfills (\ref{2.21}), (\ref{2.22}))}.
\end{equation}
One is now interested in the behavior for large $a \ge 0$ of
\begin{equation}\label{2.25}
\begin{split}
\ov{\tau}_{\rm rel}\,(a) = &\;  \mbox{$L^2(\mu_{u,v})$-relaxation time of averages for the diffusion on $\IX_{u,v}$} \\
&\;  \mbox{attached to $a \cB + \cD$}.
\end{split}
\end{equation}
The abstract set-up of the beginning of Section 1 holds, once one notes that for any $Z, Z^\prime$ in the collection above (\ref{2.21}), and smooth $f$ on $\IX_{u,v}$, one has by (\ref{2.22}) $\int Z^2 f \; Z^\prime \! f \; d\mu_{u,v} = - \int Z \! f \; [Z, Z^\prime]  f \, d\mu_{u,v}$, and (\ref{1.3}) is fulfilled.

\medskip
We will now state a necessary condition on the $s_J$ in (\ref{2.24}) for a vanishing limit of $\ov{\tau}_{\rm rel}(a)$ as $a \r \infty$. For this purpose we need some terminology. Given $J = (k,\ell,m)$ in $\cJ$ we denote by $\wh{J} = \{k,\ell,m\} \subseteq \{1, \dots, N\}$ its support, and we say that $J \not= J^\prime$ in $\cJ$ are neighbors if $\wh{J} \cap \wh{J}\,{\!^\prime} \not= \emptyset$.This provides a graph structure on $\cJ$, and we look at the subgraph corresponding to the vertex set $\cJ^s = \{ J \in \cJ; s_J \not= 0\}$ and the previous notion of neighbors. Given a connected component of $\cJ^s$, we refer to its support as the union of the supports $\wh{J}$ for $J$ in the connected component. We can now state:
\begin{equation}\label{2.26}
\mbox{if $\lim\limits_{a \r \infty} \ov{\tau}_{\rm rel} \,(a) = 0$, then $\cJ^s$ is connected and its support is $\{1, \dots, N\}$}.
\end{equation}
Indeed, if the conclusion fails, denote by $S$ the support of one connected component of $\cJ^s$ and by $c$ an element of $\{1, \dots, N\} \backslash S$. Consider the function 
\begin{equation}\label{2.27}
F(x) = \Sigma_{a \in S} \, x^2_a, \; \mbox{for $x \in \IX_{u,v}$}.
\end{equation}
Since $S$ is the support of a connected component of $\cJ^s$, then for $J \in \cJ$, $s_J \not= 0$ implies that either $\wh{J} \subseteq S$ or $\wh{J} \cap S = \emptyset$, so that $T_J \, F = 0$. We thus find that 
\begin{equation}\label{2.28}
\mbox{$\cB F = 0$, on $\IX_{u,v}$}. 
\end{equation}
As we now explain, $F$ is not constant on $\IX_{u,v}$. To this end, we choose an $x^0$ in $\IX_{u,v}$ with all coordinates different from $0$ (see Appendix A of \cite{SzniWidm26b}), and note that in $x^0$, the gradients of the functions $|x|^2, |x|^2_{-1}, F(x)$ are respectively equal to $2x^0, 2 \Lambda^{-1} x^0, 2 \wt{x}^{\, 0}$, where  $\Lambda^{-1}$ is the diagonal matrix with diagonal entries $\lambda^{-1}_\ell, 1 \le \ell \le N$, and $\wt{x}^{\, 0}_\ell = x^0_\ell \, 1\{\ell \in S\}, 1 \le \ell \le N$. These three vectors are linearly independent: Indeed, taking $a,b$ in $\wh{J} \subseteq S$ (so that $\lambda_a \not= \lambda_b$ by the fact that $J \in \cJ$), and $c$ outside $S$, we note that
\begin{equation*}
\left| \begin{array}{llc}
x^0_a & x^0_a / \lambda_a & x^0_a
\\[1ex]
x^0_b & x^0_b / \lambda_b & x^0_b
\\[1ex]
x^0_c & x^0_c / \lambda_c & 0
\end{array}\right| \; \not= 0\,.
\end{equation*}
So the gradient of $F$ at $x^0$ does not belong to the space generated by $x^0, \Lambda^{-1} x^0$, and $F$ is a smooth non-constant function on $\IX_{u,v}$ for which (\ref{2.28}) holds. By Theorem \ref{theo1.4} this implies that $\ov{\tau}_{\rm rel}\,(a), a \ge 0$, is uniformly bounded away from zero: a contradiction to the assumption in (\ref{2.26}). This completes the proof of (\ref{2.26}).

\begin{remark}\label{rem2.1} \rm So, (\ref{2.26}) provides a simple necessary condition for an enhancement through large $a$ of the $L^2$-relaxation time of averages of the diffusion on $\IX_{u,v}$ attached to $a \, \cB + \cD$. Can one find an example where such an enhancement does take place? \hfill $\square$
\end{remark}

\medskip\n
{\bf Use of generative artificial intelligence.} \\
Our only use of AI tools pertains to Example 2, which was suggested to us in its essence by ChatGPT. We have adapted it for the presentation here: all text is written by the authors.

\end{document}